\theoremstyle{remark}
\newtheorem{theorem}{Theorem}
\newtheorem{lemma}{Lemma}
\newtheorem{prop}{Proposition}
\newtheorem{corollary}{Corollary}
\newtheorem{example}{Example}
\theoremstyle{definition}
\newtheorem{definition}{Definition}
\newcommand{\hadaprod}{\circ}
\newcommand{\regprod}{*}
\newcommand{\semiprod}{*_\rtimes}
\newcommand{\directprod}{*_\times}
\newcommand{\superclass}[1]{\left[#1\right]}
\newcommand{\aut}{\operatorname{Aut}}
\newcommand{\irr}{\operatorname{Irr}}
\newcommand{\im}{\operatorname{Im}}
\author{Alexander Lang}
\title{Supercharacter Theories and Semidirect Products}
\begin{document}
\maketitle

\begin{abstract}
We describe the supercharacter theories of $H\rtimes K$ in terms of the supercharacter theories of $H\times K$ in the case when both $H$ and $K$ are Abelian. To do this we introduce the concept of a homomorphism of supercharacter theories. This provides a classification of the supercharacter theories of the dihedral groups of order $2m$ when $m$ is odd using the known classification of the supercharacter theories of cyclic groups.
\end{abstract}

\section{Introduction}
The\let\thefootnote\relax\footnote{Research partially supported by NSA grant H98320-12-1-0232 and NSF VIGRE grant DMS-0636297.} notion of a \textit{supercharacter theory} for a finite group was introduced in 2008, by Diaconis and Isaacs \cite{DiaconisIsaacs}. Since then it has become a useful tool in many different areas, including number theory and combinatorial Hopf algebras. It was also discovered in 2010, by Hendrickson that supercharacter theories are a specific kind of Schur ring \cite{Hend}, which allows us to use techniques developed for Schur rings to study supercharacter theories. 

The question of classifying all supercharacter theories for a given class of groups appears difficult. Despite the existence of several general techniques for constructing supercharacter theories, only in the case of cyclic groups have supercharacter theories been classified \cite{cyclicSchur}, \cite{cyclicSchurII}. In particular, the situation of general Abelian groups is not known, because the supercharacter theories of a direct product of groups $H \times K$ are not known in terms of the supercharacter theories of $H$ and $K$. We apply notions from the Schur ring literature to show that in some cases the supercharacter theories of the semidirect product $H\rtimes K$ are completely determined by those of the direct product $H\times K$. In particular, we use the notion of a homomorphism of supercharacter theories to show that when $H$ and $K$ are Abelian every supercharacter theory of $H\rtimes K$ is isomorphic to a supercharacter theory of $H\times K$.

\section{Preliminaries}

We first begin by recalling some of the basic notions of supercharacter theories, for more details see \cite{DiaconisIsaacs}. We denote the identity of a finite group $G$ by $e$ and the character of the trivial representation by $triv$.

\begin{definition}
Given a finite group $G$ with identity $e$, a \textit{supercharacter theory} for $G$ is a pair $(\mathcal{X},\mathcal{K})$ where $\mathcal{X}$ is a partition of $\irr(G)$ the set of irreducible characters of representations of $G$ over $\mathbb{C}$ and $\mathcal{K}$ is a partition of the set of conjugacy classes of $G$ satisfying the following conditions:

(i) $\{e\}$ is an element of $\mathcal{K}$, and $\{triv\}$ is an element of  $\mathcal{X}$

(ii) $|\mathcal{K}|=|\mathcal{X}|$

(iii) for all $X\in \mathcal{X}$ and all $K\in \mathcal{K}$ the class functions $\displaystyle \sigma_X=\sum_{\chi\in X}{\chi(e)\chi}$ satisfy $\sigma_X(g)=\sigma_X(h)$ for all $g,h\in K$.

The elements of $\mathcal{K}$ are called \textit{superclasses} and the $\sigma_X$, for all $X \in \mathcal{X}$ are called \textit{supercharacters}. We shall denote the superclass containing the element $g$ by $\superclass{g}_{\mathcal{K}}$ or by $\superclass{g}$ when $\mathcal{K}$ is clear from context.
\end{definition}

We note that the condition $\{triv\}\in \mathcal{X}$ is redundant, as it is implied by the other conditions in the definition.

There are several ways of constructing supercharacter theories. We recall the following.
\begin{example}
 Letting each conjugacy class be a superclass and $\mathcal{X}$ contain only singletons i.e. letting $\mathcal{K}$ and $\mathcal{X}$ be the finest partitions gives a supercharacter theory referred to as the \textit{minimal} supercharacter theory.
\end{example}

\begin{example}
 Letting $\mathcal{K}=\{\{e\},\{\bigcup_{g\not= e}{g}\}\}$ and $\mathcal{X}=\{\{triv\},\{\bigcup_{\chi \not= triv}{\chi}\}\}$ gives a supercharacter theory known as the \textit{maximal} supercharacter theory. 
\end{example}

The set of all supercharacter theories for $G$ forms a lattice, where the minimal and maximal supercharacter theories are the minimal and maximal elements of the lattice respectively. The partial ordering is given by the partial ordering on the partitions of $G$, or equivalently on the partitions of $\irr(G)$.

\begin{example}
\label{ex:auto}
A subgroup $H$ of $\aut(G)$ acts on both the set of conjugacy classes of $G$ and $\irr(G)$. Letting $\mathcal{K}$ be the set of orbits of the action on the conjugacy classes and $\mathcal{X}$ be the orbits of the action on $\irr(G)$ yields a supercharacter theory. Note that if $H$ acts trivially on the set of conjugacy classes, the resulting supercharacter theory is the minimal supercharacter theory. In particular, every subgroup of the group of inner automorphisms yields the minimal supercharacter theory.
\end{example}

We will utilize an alternative description of a supercharacter theory using the fact that the set of supercharacter theories for a finite group $G$ is in bijective correspondence with the set of commutative Schur rings of $G$ \cite{Hend}. See \cite{Muz1}, \cite{Muz2}, \cite{Kerbythesis}, \cite{Kerby} for a discussion of these conditions in the Schur ring setting.

\begin{definition}
The \textit{Hadamard product} $\hadaprod$ is defined on $\mathbb{C}G$ by 
\begin{equation}
\displaystyle \left(\sum_{g\in G}{a_g g}\right)\hadaprod \left(\sum_{g\in G}{b_g g}\right)=\sum_{g\in G}{(a_g b_g) g}.
\end{equation}
\end{definition}

Note that $(\mathbb{C}G,\hadaprod)$ is a commutative associative algebra, with identity $\displaystyle \sum_{g\in G}{g}$. We shall denote the usual product on $\mathbb{C}G$ by $\regprod$, to distinguish it from the Hadamard product.

\begin{prop}
\label{algebraDef}
For a finite group $G$ there is a bijective correspondence between supercharacter theories $(\mathcal{X},\mathcal{K})$  and $\mathbb{C}$-linear subspaces $A$ of $Z(\mathbb{C}G)$ containing $e$ and $\displaystyle \sum_{g\in G}{g}$ which are closed under the operations $\regprod$ and $\hadaprod$.
\end{prop}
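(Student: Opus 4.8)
The plan is to put the two commutative algebra structures $(Z(\mathbb{C}G),\hadaprod)$ and $(Z(\mathbb{C}G),\regprod)$ to work on the same underlying space and to realize the desired subspace $A$ simultaneously as a $\hadaprod$-subalgebra and a $\regprod$-subalgebra. Writing $\hat{C}=\sum_{g\in C}g$ for a conjugacy class $C$, I would recall two facts. First, the class sums $\hat{C}$ form a basis of $Z(\mathbb{C}G)$ of orthogonal idempotents for $\hadaprod$, so $(Z(\mathbb{C}G),\hadaprod)\cong\mathbb{C}^{r}$ with $r$ the number of conjugacy classes and with $\hadaprod$-identity $\sum_g g=\sum_C\hat{C}$. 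Second, the central primitive idempotents $e_\chi=\frac{\chi(e)}{|G|}\sum_g\overline{\chi(g)}g$, for $\chi\in\irr(G)$, form a basis of orthogonal idempotents for $\regprod$, so $(Z(\mathbb{C}G),\regprod)\cong\mathbb{C}^{|\irr(G)|}$ with $\regprod$-identity $e=\sum_\chi e_\chi$. The bridge to axiom (iii) is the identity $e_X:=\sum_{\chi\in X}e_\chi=\frac{1}{|G|}\sum_g\overline{\sigma_X(g)}\,g$, so that the coefficients of $e_X$ record the values of $\overline{\sigma_X}$.

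\textbf{Forward direction.} Given a supercharacter theory $(\mathcal{X},\mathcal{K})$, I would set $A=\operatorname{span}_{\mathbb{C}}\{\sum_{g\in K}g : K\in\mathcal{K}\}$. Each superclass is a union of conjugacy classes, so $A\subseteq Z(\mathbb{C}G)$; since $\{e\}\in\mathcal{K}$ we get $e\in A$, and $\sum_g g$ is the sum of all superclass sums, so $\sum_g g\in A$. The superclass sums have pairwise disjoint supports, hence are orthogonal $\hadaprod$-idempotents, so $A$ is $\hadaprod$-closed and they form a basis, giving $\dim A=|\mathcal{K}|$. For $\regprod$-closure I would invoke axiom (iii): since $\sigma_X$ is constant on each superclass, the expression above for $e_X$ shows its coefficients are constant on superclasses, so $e_X\in A$ for every $X\in\mathcal{X}$. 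The $e_X$ are linearly independent orthogonal $\regprod$-idempotents, and there are $|\mathcal{X}|=|\mathcal{K}|=\dim A$ of them by axiom (ii); hence they are a basis of $A$, so $A=\operatorname{span}\{e_X\}$ is a unital $\regprod$-subalgebra.

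\textbf{Reverse direction.} Given such an $A$, the step I expect to carry the real weight is the elementary but pivotal lemma that a unital subalgebra $B$ of $\mathbb{C}^n$ (coordinatewise product) is exactly the set of functions constant on the blocks of a partition of the coordinate set, its minimal idempotents being the indicators of the blocks. Applied to $(A,\hadaprod)$ (coordinates indexed by conjugacy classes, as $A$ is central) this yields a partition $\mathcal{K}$ whose superclass sums are the minimal $\hadaprod$-idempotents and a basis of $A$; applied to $(A,\regprod)$ (coordinates indexed by $\irr(G)$) it yields a partition $\mathcal{X}$ whose blocks give minimal idempotents $e_X$ forming a basis of $A$. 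Thus $|\mathcal{K}|=\dim A=|\mathcal{X}|$, which is axiom (ii). That $e\in A$ forces $\{e\}$ to be a singleton block of $\mathcal{K}$, and that $\sum_g g=|G|e_{triv}\in A$ forces $\{triv\}$ to be a singleton block of $\mathcal{X}$, by a short minimality argument: if the block containing $\{e\}$ were larger, then subtracting $e$ from its minimal idempotent would split that idempotent into two nonzero orthogonal idempotents of $A$, a contradiction; this gives axiom (i). Finally axiom (iii) falls out by reversing the bridge: each $e_X\in A=\operatorname{span}\{\text{superclass sums}\}$, so its coefficients, the values of $\overline{\sigma_X}$, are constant on superclasses.

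\textbf{Mutual inversion and main obstacle.} To finish I would verify the constructions are inverse, using that a basis of orthogonal idempotents of a finite-dimensional commutative semisimple algebra is its unique set of minimal idempotents. Starting from $(\mathcal{X},\mathcal{K})$, forming $A$, and re-extracting the minimal $\hadaprod$- and $\regprod$-idempotents returns the superclass sums and the $e_X$, hence the original partitions; starting from $A$, extracting the partitions and re-spanning the superclass sums returns $A$ since those sums are a basis. The only genuinely delicate point is the compatibility of the two structures on one space, namely securing $|\mathcal{K}|=|\mathcal{X}|$ and recognizing that axiom (iii) is precisely the assertion that the $\regprod$-idempotents $e_X$ lie in the $\hadaprod$-subalgebra; this is exactly what the coefficient identity for $e_X$ supplies, and it is the crux on which the whole equivalence turns.
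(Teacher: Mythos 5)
Your proof is correct and follows the same correspondence the paper describes (superclass sums $\widehat{K}$ as one basis, the idempotent sums $E_X$ as the other, with axiom (iii) serving as the bridge between the two); the paper itself only sketches this bijection and defers the verification to Hendrickson, and your argument supplies exactly the standard details of that verification.
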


The bijection is as follows. Let $(\mathcal{X},\mathcal{K})$ be a supercharacter theory. For $K\subseteq G$ define $\displaystyle \widehat{K}=\sum_{g \in K}{g}$. Then $\{\widehat{K}: K\in \mathcal{K}\}$ is a $\mathbb{C}$-basis for a linear subspace $A$ which is closed under $\regprod$ and $\hadaprod$. Further, $e\in A$, and $\displaystyle \sum_{g\in G}{g} \in A$. Recall that the set of 
\begin{equation}
e_\chi=\frac{\chi(e)}{|G|}\sum_{g\in G}{\overline{\chi(g)}g}
\end{equation}
 where $\chi \in \irr(G)$ is a $\mathbb{C}$-basis of $Z(\mathbb{C}G)$ consisting of orthogonal primitive central idempotents. For $X\in\mathcal{X}$ let $\displaystyle E_X=\sum_{\chi\in X}{e_\chi}$. Then $\{E_X:X \in \mathcal{X}\}$ is also a $\mathbb{C}$-basis for $A$. Note that $A=Z(\mathbb{C}G)$ when $(\mathcal{X},\mathcal{K})$ is the minimal supercharacter theory.

Conversely, if $A\subseteq Z(\mathbb{C}G)$ is a linear subspace containing $e$ and $\displaystyle \sum_{g\in G}{g}$ and closed under both $\regprod$ and $\hadaprod$, then there is a unique partition $\mathcal{K}$ of the set of conjugacy classes such that $\{\widehat{K}: K\in \mathcal{K}\}$ is a basis for $A$ and a unique partition $\mathcal{X}$ of $\irr(G)$ such that $\{E_X: X \in \mathcal{X}\}$ is a basis of $A$. Then the pair $(\mathcal{X},\mathcal{K})$ is a supercharacter theory. See \cite{Hend} for a discussion and proof of this proposition.

For the rest of our discussion, we shall be primarily interested in algebras satisfying the conditions of Proposition \ref{algebraDef}. Because of the above bijection we shall also refer to such algebras as supercharacter theories. Given such an algebra $A$ we shall denote the corresponding partitions of $\irr(G)$ and $G$ by $\mathcal{X}_A$ and $\mathcal{K}_A$ respectively.

\section{Homomorphisms of Supercharacter Theories}
\begin{definition}

Let $A$ be a supercharacter theory for $G$ and $B$ be a supercharacter theory for $H$. A \textit{homomorphism} of supercharacter theories is a map $\phi: A \rightarrow B$ which is an algebra homomorphism with respect to both $\regprod$ and $\hadaprod$, i.e. $\phi$ is a $\mathbb{C}$-linear map such that for all $x,y\in A$, $\phi(x\regprod y)=\phi(x)\regprod \phi(y)$ and $\phi(x\hadaprod y)=\phi(x)\hadaprod \phi(y)$.
\end{definition}

We note that in the case of an isomorphism of supercharacter theories, this definition is equivalent to the ones given in \cite{Olaf}, and \cite{Kerby}. In \cite{Kerby} we have the following result.

\begin{lemma}
\label{superclassToSuperclass}
Let $A$ be a supercharacter theory for $G$, $B$ a supercharacter theory for $H$. If $\phi:A\rightarrow B$ is a supercharacter theory isomorphism then for every $K \in \mathcal{K}_A$ there exists an $L\in \mathcal{K}_B$ such that $\phi(\widehat{K})=\widehat{L}$.

\end{lemma}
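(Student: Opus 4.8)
The plan is to characterize the superclass sums $\widehat{K}$ in purely algebraic terms that any isomorphism must respect, namely as the primitive idempotents of the Hadamard-product algebra $(A,\hadaprod)$. Since $\phi$ is in particular an algebra isomorphism with respect to $\hadaprod$, it must carry primitive $\hadaprod$-idempotents of $A$ to primitive $\hadaprod$-idempotents of $B$, and the latter will be exactly the $\widehat{L}$ with $L\in\mathcal{K}_B$. Notably, only the $\hadaprod$-structure of $\phi$ is needed.

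First I would establish this characterization. For $a=\sum_{g\in G}a_g g$ one has $a\hadaprod a=\sum_{g\in G}a_g^2\,g$, so $a$ is a $\hadaprod$-idempotent precisely when every $a_g\in\{0,1\}$. Every element of $A$ is constant on each superclass, so writing $a=\sum_{K\in\mathcal{K}_A}c_K\widehat{K}$, the $\hadaprod$-idempotents of $A$ are exactly the sums $\sum_{K\in S}\widehat{K}$ over subsets $S\subseteq\mathcal{K}_A$. Two such idempotents are orthogonal under $\hadaprod$ exactly when their index sets are disjoint, and $\sum_{K\in\mathcal{K}_A}\widehat{K}=\sum_{g\in G}g$ is the $\hadaprod$-identity. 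Hence $(A,\hadaprod)\cong\mathbb{C}^{|\mathcal{K}_A|}$, and the minimal nonzero (primitive) idempotents are precisely the singletons $\widehat{K}$, $K\in\mathcal{K}_A$. The same argument applied to $B$ identifies the primitive $\hadaprod$-idempotents of $B$ with $\{\widehat{L}:L\in\mathcal{K}_B\}$.

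Next I would invoke the standard fact that an algebra isomorphism preserves primitive idempotents. Concretely, $\phi(\widehat{K})\hadaprod\phi(\widehat{K})=\phi(\widehat{K}\hadaprod\widehat{K})=\phi(\widehat{K})$, so $\phi(\widehat{K})$ is a $\hadaprod$-idempotent of $B$; and it is primitive because any decomposition $\phi(\widehat{K})=u+v$ into nonzero orthogonal $\hadaprod$-idempotents of $B$ would pull back under the $\hadaprod$-homomorphism $\phi^{-1}$ to a decomposition $\widehat{K}=\phi^{-1}(u)+\phi^{-1}(v)$ into nonzero orthogonal idempotents, contradicting primitivity of $\widehat{K}$ in $A$. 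Therefore $\phi(\widehat{K})$ must be one of the $\widehat{L}$ with $L\in\mathcal{K}_B$, which is the desired conclusion.

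As for difficulty, I expect no serious obstacle here: the entire content is the recognition that the superclass sums form the distinguished orthogonal-idempotent basis of $(A,\hadaprod)$, a property stated without reference to the underlying group and hence automatically preserved by an isomorphism. The only points requiring care are verifying that the $\hadaprod$-idempotents of $A$ are exactly the $0/1$-supported elements and that primitivity transfers back through $\phi^{-1}$, both routine once the bijectivity of $\phi$ is used.
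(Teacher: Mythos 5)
Your proof is correct, and it is essentially the same argument the paper uses: the paper cites this lemma to \cite{Kerby} without proof, but its proof of the analogous Lemma \ref{supercharToSuperchar} is exactly your argument transported to the $\regprod$-idempotents $E_X$ (idempotency forces $0/1$ coefficients in the distinguished basis, and pulling back through $\phi^{-1}$ with injectivity forces a single nonzero coefficient). Your packaging of this as ``isomorphisms preserve primitive $\hadaprod$-idempotents'' is a clean way to say the same thing.
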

See \cite{Muz1}, \cite{Muz2}, \cite{Kerbythesis}, and \cite{Kerby} for details. By an identical argument as that in \cite{Kerbythesis}, we see that an analogous result holds for the partitions of $\irr(G)$ and $\irr(H)$. For completeness we present a proof.

\begin{lemma}
\label{supercharToSuperchar}
Let $A$ be a supercharacter theory for $G$, $B$ a supercharacter theory for $H$. If $\phi:A\rightarrow B$ is a supercharacter theory isomorphism then for every $X\in \mathcal{X}_A$ there exists a $Y\in \mathcal{X}_B$ such that $\phi(E_X)=E_Y$.
\end{lemma}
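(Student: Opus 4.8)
The plan is to mimic the proof of Lemma~\ref{superclassToSuperclass}, but working with the idempotent basis $\{E_X : X \in \mathcal{X}_A\}$ of $A$ in place of the superclass-sum basis, and exploiting the characterization of the $E_X$ as the primitive idempotents of $(A,\regprod)$.

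**What I would prove first** is that the elements $E_X$ are precisely the primitive idempotents of the commutative algebra $(A,\regprod)$. Each $E_X = \sum_{\chi \in X} e_\chi$ is idempotent under $\regprod$ since the $e_\chi$ are orthogonal central idempotents of $Z(\mathbb{C}G)$, and they are pairwise orthogonal as $X$ ranges over $\mathcal{X}_A$. To see they are \emph{primitive in $A$}, I would argue that if some $E_X = a \regprod b$-style decomposition split it into two nonzero orthogonal idempotents both lying in $A$, then expressing those idempotents in the $\{e_\chi\}$ basis of $Z(\mathbb{C}G)$ would refine the partition $\mathcal{X}_A$, contradicting that $\{E_X\}$ is exactly the $A$-basis coming from $\mathcal{X}_A$. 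Thus $\{E_X : X \in \mathcal{X}_A\}$ is intrinsically the complete set of primitive idempotents of $(A,\regprod)$, a fact depending only on the algebra structure and not on the chosen basis.

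**The key step** is then purely categorical. Since $\phi : A \to B$ is an isomorphism of algebras with respect to $\regprod$, it carries primitive idempotents of $(A,\regprod)$ bijectively onto primitive idempotents of $(B,\regprod)$: indeed $\phi(E_X)\regprod \phi(E_X) = \phi(E_X \regprod E_X) = \phi(E_X)$ shows $\phi(E_X)$ is idempotent, $\phi$ preserves orthogonality because $\phi(E_X)\regprod\phi(E_{X'}) = \phi(E_X \regprod E_{X'}) = \phi(0) = 0$ for $X \neq X'$, and primitivity is preserved because any nontrivial splitting of $\phi(E_X)$ in $B$ would pull back through $\phi^{-1}$ to a nontrivial splitting of $E_X$ in $A$. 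By the first step applied to $B$, the primitive idempotents of $(B,\regprod)$ are exactly $\{E_Y : Y \in \mathcal{X}_B\}$. Hence for each $X \in \mathcal{X}_A$ there is a unique $Y \in \mathcal{X}_B$ with $\phi(E_X) = E_Y$, which is the claim.

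**The main obstacle** I anticipate is the first step: establishing that the $E_X$ are not merely idempotents of $A$ but the \emph{primitive} ones, since primitivity is what makes the set basis-independent and therefore preserved by any algebra isomorphism. The cleanest route is to use the bijection in Proposition~\ref{algebraDef}, which guarantees that $\{E_X : X \in \mathcal{X}_A\}$ is the \emph{unique} basis of $A$ consisting of idempotents summing to $\sum_{g\in G} g$ with a further minimality property; one must check carefully that no coarser idempotent in $A$ can properly refine into two $A$-members, which is exactly where the uniqueness clause of Proposition~\ref{algebraDef} is used. Once primitivity is secured, the transfer across $\phi$ is formal and the counting in condition~(ii) of the supercharacter theory definition guarantees the correspondence $X \mapsto Y$ is a bijection.
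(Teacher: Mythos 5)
Your proof is correct and is essentially the paper's argument in more intrinsic language: the paper writes $\phi(E_X)=\sum_{Y}a_Y E_Y$, uses $\phi(E_X)\regprod\phi(E_X)=\phi(E_X)$ to force $a_Y\in\{0,1\}$, and then applies $\phi^{-1}$ together with linear independence of $\{E_W : W\in\mathcal{X}_A\}$ to rule out more than one nonzero $a_Y$ --- exactly your ``idempotents map to idempotents, and primitivity is preserved because a nontrivial splitting would pull back through $\phi^{-1}$'' steps. The only difference is presentational: you package these two steps as the observation that $\{E_X : X\in\mathcal{X}_A\}$ is intrinsically the set of primitive idempotents of $(A,\regprod)$, which is a clean (and correct) way of seeing why the conclusion is forced.
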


\begin{proof}

Let $X\in \mathcal{X}_A$. Since $\{E_Y:Y \in \mathcal{X}_B\}$ is a basis of $B$ we have $\displaystyle \phi(E_X)=\sum_{Y\in \mathcal{X}_B}{a_YE_Y}$ for some $a_Y\in \mathbb{C}$. Since 
\begin{equation}
\phi(E_X)=\phi(E_X\regprod E_X)=\phi(E_X)\regprod \phi(E_X)
\end{equation}
we have 
\begin{equation}
\sum_{Y\in \mathcal{X}_B}{a_Y E_Y}=\sum_{Y\in \mathcal{X}_B}{a_Y^2 E_Y}
\end{equation}
so $a_Y$ is either 0 or 1 for all $Y$. We want to show that there is only one nonzero $a_Y$. We have
\begin{equation}
E_X=\sum_{Y\in \mathcal{X}_B}{a_Y\phi^{-1}(E_Y)}.
\end{equation}
Since $\phi^{-1}$ is also a supercharacter theory isomorphism, there exist $b_{Y,W}\in\{0,1\}$ such that
\begin{equation}
E_X=\sum_{Y\in \mathcal{X}_B}{a_Y\sum_{W\in \mathcal{X}_A}{b_{Y,W}E_W}}.
\end{equation}
Since $\{E_X:X\in \mathcal{X}_A\}$ is linearly independent, $\phi^{-1}$ is injective, and $a_Yb_{Y,W}\in \{0,1\}$ we see that there must be exactly one nonzero $a_Y$. Hence there exists $Y$ such that $\phi(E_X)=E_Y$.
\end{proof}

We will also need the following lemma from \cite{Kerbythesis}, \cite{Kerby}.
\begin{lemma}
\label{sizeOfSuperclasses}
Let $A$ be a supercharacter theory for $G$, $B$ a supercharacter theory for $H$, and $\phi:A\rightarrow B$ a supercharacter theory isomorphism. If $K\in \mathcal{K}_A$ and $L\in \mathcal{K}_B$ such that $\phi(\widehat{K})=\widehat{L}$, then $|K|=|L|$.
\end{lemma}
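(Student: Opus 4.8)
The plan is to recover the cardinality $|K|$ from the algebra structure in a way that is manifestly preserved by $\phi$, exploiting the interaction of the two products. The key observation is that for any $x=\sum_{g}a_g g\in A$ one has $x\hadaprod e = a_e\,e$, so Hadamard multiplication by $e$ reads off the coefficient of the identity. First I would check that $\phi(e)=e$: since $e=\widehat{\{e\}}\in A$ by condition (i) and $e$ is the $\regprod$-unit of each algebra, an isomorphism for $\regprod$ must send the unit to the unit. Combining these two facts, for every $x\in A$ the identity $\phi(x\hadaprod e)=\phi(x)\hadaprod \phi(e)=\phi(x)\hadaprod e$ forces the coefficient of $e$ in $x$ to equal the coefficient of $e$ in $\phi(x)$. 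Thus $\phi$ preserves the ``identity-coefficient'' functional, which is the workhorse of the argument.

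Next I would express $|K|$ through this functional. Writing $\widehat{K^{-1}}=\sum_{g\in K}g^{-1}$, the coefficient of $e$ in the regular product $\widehat{K}\regprod\widehat{K^{-1}}$ counts the pairs $(g,h)\in K\times K$ with $gh^{-1}=e$, which is exactly $|K|$. For this to be usable I need $\widehat{K^{-1}}\in A$, i.e.\ that superclasses are closed under inversion. This is standard, but for completeness it follows quickly from the idempotent description: each $e_\chi$ is fixed by the involution $(\sum_g a_g g)^\dagger=\sum_g\overline{a_g}\,g^{-1}$ (using $\chi(g^{-1})=\overline{\chi(g)}$), hence so is every $E_X$, so $A$ is closed under $\dagger$; and $A$ is closed under coefficientwise complex conjugation because it has the real basis $\{\widehat{L}:L\in\mathcal{K}_A\}$. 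Composing these two (conjugate-linear) closures gives closure under the linear map $g\mapsto g^{-1}$, so indeed $\widehat{K^{-1}}\in A$.

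Now I would push the computation through $\phi$. By Lemma \ref{superclassToSuperclass}, $\phi(\widehat{K^{-1}})=\widehat{L'}$ for some $L'\in\mathcal{K}_B$. Applying $\phi$ to $\widehat{K}\regprod\widehat{K^{-1}}$, using multiplicativity for $\regprod$ together with $\phi(\widehat{K})=\widehat{L}$ and the preservation of the identity-coefficient, shows that $|K|$ equals the coefficient of $e$ in $\widehat{L}\regprod\widehat{L'}$, namely
\[
|K|\;=\;\bigl|\,L\cap (L')^{-1}\,\bigr|,\qquad (L')^{-1}=\{g^{-1}:g\in L'\}.
\]
Since $|K|\ge 1$, the sets $L$ and $(L')^{-1}$ intersect; as $\mathcal{K}_B$ is likewise closed under inversion, $(L')^{-1}$ is a genuine superclass, and distinct superclasses are disjoint. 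Hence $(L')^{-1}=L$, the intersection is all of $L$, and $|K|=|L|$.

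The only real obstacle is the inversion bookkeeping. The naive functional that recovers $|K|$, the full augmentation $\sum_g a_g$, is not visibly $\phi$-invariant and would require a relation between $|G|$ and $|H|$ that we do not have at the outset; the remedy is to route $|K|$ through the identity-coefficient, which \emph{is} preserved, at the cost of introducing $\widehat{K^{-1}}$ and hence needing closure of superclasses under inversion in both $G$ and $H$, plus the disjointness argument in the final step. Everything else is a formal consequence of $\phi$ respecting both $\regprod$ and $\hadaprod$.
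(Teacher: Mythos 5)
The paper does not prove this lemma at all --- it imports it from Kerby's thesis --- so there is no in-paper argument to compare against; judged on its own, your proof is correct and self-contained. The two pillars both check out: $\phi(e)=e$ because $e$ is the $\regprod$-unit of both algebras, so $x\mapsto(\text{coefficient of }e\text{ in }x)$ is preserved via $x\hadaprod e$; and the coefficient of $e$ in $\widehat{K}\regprod\widehat{K^{-1}}$ is indeed $|K|$. Your closure-under-inversion argument (fixedness of each $e_\chi$ under $\dagger$, plus coefficientwise conjugation via the real basis $\{\widehat{L}\}$) is also sound. One small step you should make explicit: your argument only shows directly that $K^{-1}$ is a \emph{union} of superclasses, whereas you need it to be a \emph{single} superclass both to apply Lemma \ref{superclassToSuperclass} to $\widehat{K^{-1}}$ and to conclude $(L')^{-1}=L$ at the end. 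This follows in one line because inversion is an involution carrying unions of blocks to unions of blocks, hence permutes the blocks of the partition, but it is doing real work and deserves a sentence. Finally, your closing remark sells the augmentation short: $\epsilon(\sum a_g g)=\sum a_g$ \emph{is} preserved by $\phi$, since $x\regprod\widehat{G}=\epsilon(x)\widehat{G}$ and $\phi(\widehat{G})=\widehat{H}$ ($\widehat{G}$ being the $\hadaprod$-unit), which gives $|L|=\epsilon(\phi(\widehat{K}))=\epsilon(\widehat{K})=|K|$ immediately and also $|G|=|H|$ from $\widehat{G}\regprod\widehat{G}=|G|\widehat{G}$. That route avoids the inversion bookkeeping entirely; your longer route remains valid.
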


Note that this implies that $|G|=|H|$.

\section{Supercharacter Theory Isomorphisms and Supercharacter Tables}
We now describe a relationship between supercharacter theory isomorphisms and character tables. A version of this relationship is the \textit{magic rectangle condition} of \cite{Johnson} which does not use the language of supercharacters, and is known to hold more generally, see \cite{Bannai} and \cite{Quasigroupspaper}. For completeness we present a proof for our current situation.

\begin{definition}

A \textit{supercharacter table} of a supercharacter theory is the matrix where the $i,j$th entry is the $i$th supercharacter evaluated at the $j$th superclass for some ordering of the superclasses and supercharacters.
\end{definition}

If $\sigma_X$ is the $i$th supercharacter and $g$ is an element in the $j$th superclass the $i,j$th entry is given by 
\begin{equation}
\displaystyle \sum_{\chi \in X}{\chi(e)\chi(g)}.
\end{equation}
Note that the supercharacter table for the minimal supercharacter theory is obtained by scaling each row of the usual character table by its first entry, i.e. each irreducible character is scaled by its dimension.

\begin{theorem}
Let $A$ be a supercharacter theory for $G$, $B$ a supercharacter theory for $H$. There is an isomorphism of supercharacter theories $\phi:A\rightarrow B$ iff the supercharacter tables of $A$ and $B$ are identical up to reordering of rows and columns.
\end{theorem}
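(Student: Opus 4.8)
The plan is to exploit the two canonical bases of a supercharacter theory algebra---the superclass sums $\widehat{K}$ and the block idempotents $E_X$---and to record exactly how the supercharacter table mediates the change of basis between them. Writing $\sigma_X(K)$ for the common value of $\sigma_X$ on the superclass $K$ (well defined by condition (iii)), the idempotent formula for $e_\chi$ together with the fact that each $\chi(e)$ is a positive real gives
\begin{equation}
E_X=\sum_{\chi\in X}e_\chi=\frac{1}{|G|}\sum_{K\in\mathcal{K}_A}\overline{\sigma_X(K)}\,\widehat{K}.
\end{equation}
Thus the matrix $M$ with entries $M_{X,K}=\tfrac{1}{|G|}\overline{\sigma_X(K)}$ expresses the $E_X$-basis in the $\widehat{K}$-basis; since both are bases of $A$ and $|\mathcal{X}_A|=|\mathcal{K}_A|$, this matrix is invertible, and both $M$ and $M^{-1}$ are determined by $|G|$ and the supercharacter table of $A$.

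For the forward direction I would assume $\phi\colon A\to B$ is an isomorphism. Lemmas \ref{superclassToSuperclass} and \ref{supercharToSuperchar} supply bijections $K\mapsto L_K$ on superclasses and $X\mapsto Y_X$ on blocks with $\phi(\widehat{K})=\widehat{L_K}$ and $\phi(E_X)=E_{Y_X}$, while Lemma \ref{sizeOfSuperclasses} gives $|K|=|L_K|$ and, in particular, $|G|=|H|$. Applying $\phi$ to the displayed expansion of $E_X$ and using $|G|=|H|$, I would compare the resulting expansion of $E_{Y_X}$ in the basis $\{\widehat{L}:L\in\mathcal{K}_B\}$ with the intrinsic expansion $E_{Y_X}=\tfrac{1}{|H|}\sum_{L}\overline{\sigma_{Y_X}(L)}\,\widehat{L}$. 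Matching the coefficient of $\widehat{L_K}$ yields $\sigma_{Y_X}(L_K)=\sigma_X(K)$ for all $X$ and $K$, which is precisely the statement that permuting the rows of the table of $B$ by $X\mapsto Y_X$ and the columns by $K\mapsto L_K$ reproduces the table of $A$.

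For the converse I would use the given reordering to build $\phi$. Suppose bijections $\tau\colon\mathcal{X}_A\to\mathcal{X}_B$ and $\pi\colon\mathcal{K}_A\to\mathcal{K}_B$ satisfy $\sigma_X(K)=\sigma_{\tau(X)}(\pi(K))$. Define $\phi(E_X)=E_{\tau(X)}$ and extend linearly; this is automatically a linear bijection sending one orthogonal-idempotent basis for $\regprod$ to another, so $\phi(x\regprod y)=\phi(x)\regprod\phi(y)$ is immediate from $E_X\regprod E_{X'}=\delta_{X,X'}E_X$. The real work is to show $\phi$ respects $\hadaprod$, for which it suffices to prove $\phi(\widehat{K})=\widehat{\pi(K)}$, since the $\widehat{K}$ are an orthogonal-idempotent basis for $\hadaprod$ as well ($\widehat{K}\hadaprod\widehat{K'}=\delta_{K,K'}\widehat{K}$ because distinct superclasses are disjoint). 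I would obtain this by inverting the basis relation, $\widehat{K}=\sum_X (M^{-1})_{K,X}E_X$, applying $\phi$, and matching against the analogous inverse expansion of $\widehat{\pi(K)}$ in $B$.

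The main obstacle is exactly this last identification: it forces me to know not merely that the raw table entries of $A$ and $B$ agree under $\tau,\pi$, but that the full transition data $M^{-1}$ is transported correctly, which in turn requires $|G|=|H|$ and that $\pi$ carry the identity superclass of $G$ to that of $H$ (so that the scalar $\tfrac{1}{|G|}$ and the choice of identity column match). This is where the only genuinely nontrivial input enters: the column of the table indexed by $K$ sums to $\sum_{\chi\in\irr(G)}\chi(e)\chi(g)$ for $g\in K$, the value of the regular character, which equals $|G|$ when $K=\{e\}$ and $0$ otherwise. Hence $|G|$ and the identity column are intrinsic to the table and are preserved by any reordering. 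This pins down $M^A_{X,K}=M^B_{\tau(X),\pi(K)}$, whence $(M^A)^{-1}_{K,X}=(M^B)^{-1}_{\pi(K),\tau(X)}$, and the computation $\phi(\widehat{K})=\widehat{\pi(K)}$ goes through, completing the verification that $\phi$ is a supercharacter theory isomorphism.
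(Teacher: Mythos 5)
Your proposal is correct, and the forward direction is essentially the paper's argument verbatim: expand $E_X$ in the superclass basis, apply $\phi$, invoke Lemmas \ref{supercharToSuperchar}, \ref{superclassToSuperclass}, and \ref{sizeOfSuperclasses}, and match coefficients of the $\widehat{L}$. The converse is set up dually to the paper's: the paper defines $\phi$ on the superclass basis by $\phi(\widehat{K_i})=\widehat{L_i}$, so that preservation of $\hadaprod$ is free and the table equality is then read off to give $\phi(E_{X_i})=E_{Y_i}$; you instead define $\phi$ on the idempotent basis, get $\regprod$ for free, and recover $\phi(\widehat{K})=\widehat{\pi(K)}$ by inverting the transition matrix $M$. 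Your route costs an extra inversion, but it forces you to confront a point the paper passes over in silence: the needed identity $M^A_{X,K}=M^B_{\tau(X),\pi(K)}$ (equivalently, the paper's claim that equal table entries give $\phi(E_{X_i})=E_{Y_i}$) requires $|G|=|H|$, and in the converse direction this cannot be obtained from Lemma \ref{sizeOfSuperclasses}. Your observation that the column sums of a supercharacter table are the values of the regular character, so that the table intrinsically determines $|G|$ and singles out the identity column, supplies exactly this missing input and is a worthwhile refinement of the paper's converse.
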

\begin{proof}

Let $\phi:A \rightarrow B$ be an isomorphism of supercharacter theories. Let $X\in \mathcal{X}_A$. Then
\begin{equation}
\phi(E_X)=\frac{1}{|G|}\sum_{\chi \in X}{\sum_{\superclass{g}\in \mathcal{K}_A}{\chi(e)\overline{\chi(g)}\phi(\widehat{\superclass{g}})}}.
\end{equation}
 By Lemma \ref{supercharToSuperchar} there is a $Y\in \mathcal{X}_B$ such that $\phi(E_X)=E_Y$. Then we have
\begin{equation}
\label{eq:x}
\frac{1}{|G|}\sum_{\chi \in X}{\sum_{\superclass{g}\in \mathcal{K}_A}{\chi(e)\overline{\chi(g)}\phi(\widehat{\superclass{g}})}}=\frac{1}{|H|}\sum_{\tilde{\chi}\in Y}{\sum_{\superclass{h}\in \mathcal{K}_B}{\tilde{\chi}(e)\overline{\tilde{\chi}(h)}\widehat{\superclass{h}}}}.
\end{equation}
 For a fixed a superclass $\superclass{h}$ by Lemma \ref{superclassToSuperclass} there is a unique superclass $\superclass{g}$ such that $\phi(\widehat{\superclass{g}})=\widehat{\superclass{h}}$. Equating coefficients in equation \ref{eq:x} gives
\begin{equation}
\frac{1}{|G|}\sum_{\chi \in X}{\chi(e)\overline{\chi(g)}}=\frac{1}{|H|}\sum_{\tilde{\chi}\in Y}{\tilde{\chi}(e)\overline{\tilde{\chi}(h)}}.
\end{equation}
Taking complex conjugates we have
\begin{equation}
\label{eq:y}
\frac{1}{|G|}\sum_{\chi \in X}{\chi(e)\chi(g)}=\frac{1}{|H|}\sum_{\tilde{\chi}\in Y}{\tilde{\chi}(e)\tilde{\chi}(h)}.
\end{equation}
By Lemma \ref{sizeOfSuperclasses} $|G|=|H|$ giving
\begin{equation}
\sum_{\chi\in X}{\chi(e)\chi(g)}=\sum_{\tilde{\chi}\in Y}{\tilde{\chi}(e)\tilde{\chi}(h)}.
\end{equation}
So we see that the supercharacter tables have the same entries.

Conversely, suppose that the supercharacter tables are identical $n$ by $n$ matrices after appropriate rearrangement. Each column corresponds to a superclass, let $K_1,\ldots, K_n$ and $L_1,\ldots, L_n$ be the superclasses corresponding to columns $1,\ldots,n$ after rearranging for $G$ and $H$ respectively. Let $\phi:A\rightarrow B$ be the $\mathbb{C}$-linear map defined by $\phi(\widehat{K_i})=\widehat{L_i}$ for $i=1,\ldots,n$. Then $\phi$ preserves $\hadaprod$. Each row corresponds to a supercharacter, let $X_1,\ldots,X_n$ and $Y_1,\ldots,Y_n$ be the subsets of $\irr(G)$ and $\irr(H)$ respectively corresponding to rows $1,\ldots,n$. Because the tables' entries are identical, $\phi(E_{X_i})=E_{Y_i}$ for all $i$, so $\phi$ is an algebra isomorphism. Hence $\phi$ is a supercharacter theory isomorphism.
\end{proof}

\section{Semidirect Products}

Let $H,K$ be finite Abelian groups and $H \rtimes_{\psi} K$ the semidirect product corresponding to a homomorphism $\psi:K \rightarrow \aut(H)$. We show that there exists a supercharacter theory $A$ for $H\times K$ which is isomorphic to the minimal supercharacter theory for $H \rtimes_{\psi} K$. This implies that every supercharacter theory of $H \rtimes_{\psi} K$ is isomorphic to a supercharacter theory of $H \times K$ and there is a bijection between the supercharacter theories of $H \rtimes_{\psi} K$ and supercharacter theories $(\mathcal{X},\mathcal{K})$ of $H \times K$ which are coarser than $(\mathcal{X}_A,\mathcal{K}_A)$. In particular, the lattice of supercharacter theories for $H \rtimes_{\psi} K$ is a sublattice of the lattice of supercharacter theories for $H\times K$.

\begin{theorem}

Let $H$, $K$ be finite Abelian groups and $H\rtimes_{\psi} K$ be the semidirect product corresponding to $\psi:K\rightarrow \aut(H)$. Then there exists a supercharacter theory $A$ of $H \times K$ which is isomorphic to the minimal supercharacter theory of $H \rtimes_{\psi} K$.
\end{theorem}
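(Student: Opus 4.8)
The plan is to exploit the preceding theorem: two supercharacter theories are isomorphic exactly when their supercharacter tables coincide up to permutation of rows and columns. Thus it suffices to produce a supercharacter theory $A$ of $H\times K$ whose supercharacter table equals that of the minimal supercharacter theory of $G=H\rtimes_\psi K$, which is just the character table of $G$ with each row scaled by the degree of the corresponding irreducible character. I would build $A$ in two stages: first a natural ``automorphism'' supercharacter theory $A_0$ coming from Example \ref{ex:auto}, then a prescribed coarsening of $A_0$ that accounts for the twisting $\psi$.

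For the first stage, writing $\psi_k=\psi(k)$, each map $\alpha_k\colon (h,\ell)\mapsto(\psi_k(h),\ell)$ is an automorphism of the direct product $H\times K$, and $L=\{\alpha_k:k\in K\}$ is a subgroup of $\aut(H\times K)$. By Example \ref{ex:auto} the $L$-orbits give a supercharacter theory $A_0$ whose superclasses are the sets $\mathcal O_K(h)\times\{\ell\}$, where $\mathcal O_K(h)=\{\psi_k(h):k\in K\}$, and whose supercharacters are indexed by pairs $(O,\mu)$ with $O$ a $K$-orbit on $\irr(H)$ and $\mu\in\irr(K)$; explicitly $\sigma_{(O,\mu)}(h,\ell)=\mu(\ell)\sum_{\lambda\in O}\lambda(h)$. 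This $A_0$ is too fine: its superclasses do not yet see the subgroups $B_\ell=\im(\mathrm{id}_H-\psi_\ell)=\{h\,\psi_\ell(h)^{-1}:h\in H\}$ that fatten the conjugacy classes of $G$.

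Next I would compute the target table. A direct application of Clifford theory (the little-group method) to $G$, valid because $H$ is abelian and normal with abelian complement $K$, shows that $\irr(G)$ is parametrized by pairs $(O,\nu)$ where $O\subseteq\irr(H)$ is a $K$-orbit with common stabilizer $K_O\le K$ and $\nu\in\irr(K_O)$; the corresponding character has degree $|O|$ and satisfies $\chi_{(O,\nu)}(h,\ell)=\nu(\ell)\sum_{\lambda\in O}\lambda(h)$ when $\ell\in K_O$ and $0$ otherwise. Combining this with the Fourier identity $\sum_{\mu\in\irr(K),\,\mu|_{K_O}=\nu}\mu(\ell)=|O|\,\nu(\ell)$ for $\ell\in K_O$ (and $0$ for $\ell\notin K_O$) shows that the degree-scaled character $|O|\chi_{(O,\nu)}$ is exactly $\sum_{\mu|_{K_O}=\nu}\sigma_{(O,\mu)}$. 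This tells me precisely how to coarsen: let $A$ be obtained from $A_0$ by merging the supercharacters $(O,\mu)$ and $(O,\mu')$ whenever $\mu|_{K_O}=\mu'|_{K_O}$, and correspondingly merging the superclasses of $A_0$ into the conjugacy classes of $G$, namely $\mathcal O_K(h)\,B_\ell\times\{\ell\}$. With these definitions the supercharacter table of $A$ agrees entrywise with that of the minimal supercharacter theory of $G$.

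The main obstacle is proving that this coarsening $A$ is genuinely a supercharacter theory, i.e.\ verifying conditions (i)--(iii) of the definition (equivalently, that the span of the proposed superclass sums is closed under \regprod{} and \hadaprod). Conditions (i) and (ii) reduce to bookkeeping --- the trivial pieces are singletons, and the common cardinality $\sum_O|K_O|$ is simultaneously the number of irreducible characters and the number of conjugacy classes of $G$. The delicate point is (iii): I must check that each merged supercharacter $\sigma_{(O,\nu)}=\sum_{\mu|_{K_O}=\nu}\sigma_{(O,\mu)}$ is constant on the enlarged superclasses $\mathcal O_K(h)\,B_\ell\times\{\ell\}$. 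Invariance under $\mathcal O_K$ is immediate since orbit sums are $K$-invariant; the real content is invariance under translation by $B_\ell$, which holds because $\lambda(b)=1$ for every $\lambda\in O$ and $b\in B_\ell$ exactly when $\ell\in K_O$ --- precisely the range of $\ell$ on which $\sigma_{(O,\nu)}$ is nonzero. This compatibility between the orbit sums on $\irr(H)$ and the subgroups $B_\ell=\im(\mathrm{id}_H-\psi_\ell)$ on $H$ is the crux of the argument; once it is established, the preceding theorem yields the desired isomorphism.
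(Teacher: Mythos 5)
Your argument is correct, but it takes a genuinely different route from the paper. The paper never computes $\irr(H\rtimes_{\psi}K)$ at all: it defines $\phi$ on the span of the conjugacy class sums of $G=H\rtimes_{\psi}K$ by the identity map on the common underlying set, and proves closure under $\directprod$ by showing the two convolutions literally agree on class sums, $\widehat{C_1}\semiprod\widehat{C_2}=\widehat{C_1}\directprod\widehat{C_2}$, via a cyclic re-indexing of the partial orbits $\{(\psi_{k^i}(h'),k')\}_{i=1}^{n}$ inside $C_1\times C_2$; the table theorem is not invoked. You instead take the table theorem as the engine, run the little-group method to get $\chi_{(O,\nu)}(h,\ell)=\nu(\ell)\sum_{\lambda\in O}\lambda(h)$ for $\ell\in K_O$ (zero otherwise), and match $\chi_{(O,\nu)}(e)\chi_{(O,\nu)}$ with $\sum_{\mu|_{K_O}=\nu}\sigma_{(O,\mu)}$ via the identity $\sum_{\mu|_{K_O}=\nu}\mu(\ell)=|O|\nu(\ell)$ on $K_O$; your verification of condition (iii) through $\lambda(a\psi_\ell(a)^{-1})=1$ for $\ell\in K_O$ is the correct crux, and your counts for (i) and (ii) check out since both sides equal the class number $\sum_O|K_O|$ of $G$. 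The trade-off: the paper's argument is shorter and needs no character theory of $G$, whereas yours produces strictly more explicit output --- a parametrization of the superclasses $\mathcal{O}_K(h)B_\ell\times\{\ell\}$ and supercharacters of $A$, and a precise description of how $A$ coarsens the orbit theory induced by $\im(\psi)$, which the paper only remarks on qualitatively after its proof. One cosmetic point: in condition (iii) you only need the implication $\ell\in K_O\Rightarrow\lambda(b)=1$ for $b\in B_\ell$ together with the vanishing of $\chi_{(O,\nu)}$ off $H\rtimes K_O$, not the full "exactly when," though the equivalence does happen to hold.
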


\begin{proof}

Let $M$ be the minimal supercharacter theory for $H \rtimes_{\psi} K$. We consider both $H \times K$ and $H \rtimes_{\psi} K$ to be groups whose underlying set is the Cartesian product of $H$ and $K$. Let $\directprod$ be the product in $H\times K$ and $\semiprod$ the product in $H \rtimes_{\psi} K$. We denote $\psi(k)$ by $\psi_k$ and recall that $(h_1,k_1)\semiprod (h_2,k_2)=(h_1 \psi_{k_1}(h_2),k_1k_2)$. 

We proceed by defining an injective linear map $\phi:M\rightarrow \mathbb{C}(H\times K)$, and showing that $\phi$ is a supercharacter homomorphism. We will then show that the desired supercharacter theory of $H\times K$ will be $A=\im(\phi)$. Note that since $H\times K$ is Abelian, any subset of $H\times K$ is a union of conjugacy classes.

 Let $\phi:M\rightarrow \mathbb{C}(H\times K)$ be the $\mathbb{C}$-linear map defined by setting $\phi(\widehat{C})=\widehat{C}$ where $C$ is a subset of the Cartesian product of $H$ and $K$ which is a conjugacy class of $H \rtimes_{\psi} K$ and equivalently a superclass of $M$. Clearly $\phi$ is a homomorphism of $\mathbb{C}$-algebras with respect to $\hadaprod$. We now show that $\regprod$ is preserved. 

To show that $\regprod$ is preserved, let $C_1$, $C_2$ be conjugacy classes in $H \rtimes_{\psi} K$. We want to show that
\begin{equation}
\displaystyle \left( \sum_{(h,k)\in C_1}{(h,k)}\right)\directprod \left( \sum_{(h',k')\in C_2}{(h',k')} \right)
\end{equation}
is equal to
\begin{equation}
\left( \sum_{(h,k)\in C_1}{(h,k)}\right)\semiprod \left( \sum_{(h',k')\in C_2}{(h',k')} \right).
\end{equation}

Fix $(h,k)\in C_1$ and $(h',k') \in C_2$, and let $n$ be the smallest positive integer such that $\psi_{k^n}(h')=h'$. Then 

\begin{equation}
(h,k)\semiprod \left( \sum_{i=1}^n{(e,k^i)\semiprod(h',k')\semiprod(e,k^{-i})}\right)
\end{equation}

\begin{equation}
=(h,k)\semiprod \left( \sum_{i=1}^n{(\psi_{k^i}(h'),k')} \right)
\end{equation}

\begin{equation}
=\sum_{i=1}^n{(h,k)\semiprod (\psi_{k^i}(h'),k')}=\sum_{i=1}^n{(h\psi_{k^{i+1}}(h'),kk')}
\end{equation}

\begin{equation}
=\sum_{i=1}^n{(h\psi_{k^i}(h'),kk')}=(h,k)\directprod \left( \sum_{i=1}^n{(\psi_{k^i}(h'),k')} \right)
\end{equation}

\begin{equation}
=(h,k)\directprod \left( \sum_{i=1}^n{(e,k^i)\semiprod(h',k')\semiprod(e,k^{-i})} \right).
\end{equation}

Since $C_1\times C_2$ is a disjoint union of sets of the form 
\begin{equation}
\bigcup_{i=1}^n \left\{\left( (h,k), (e,k^i)\semiprod(h',k')\semiprod(e,k^{-i}) \right)\right\}
\end{equation}
 where $n$ is as above, we have our desired equality. Therefore $\phi$ preserves $\regprod$, and is a supercharacter homomorphism. Since $e\in \im(\phi)$ and $\sum_{g\in H\times K}{g}\in \im(\phi)$, $A=\im(\phi)$ is a supercharacter theory of $H\times K$. Since $\phi$ is injective $M$ is isomorphic to $A$.
\end{proof}

We note the following property of the supercharacter theory $\im(\phi)$. Since $\im(\psi) \subseteq \aut(H)\subseteq \aut(H\times K)$, it induces a supercharacter theory of $H\times K$ by the method of example \ref{ex:auto}. The supercharacter theory $\im(\phi)$ is always equal to or coarser than the one induced by $\im(\psi)$ because conjugation by elements of the form $(e,k)$ in $H \rtimes_{\psi} K$ satisfies:

\begin{equation}
(e,k)\semiprod (h,k') \semiprod (e,k)^{-1}=(\psi_k(h),k').
\end{equation}

Since the supercharacter theories of cyclic groups have been classified, see \cite{cyclicSchur} and \cite{cyclicSchurII}, the above result provides a classification of the supercharacter theories for some dihedral groups:

\begin{corollary}
If $G$ is a dihedral group of order $2m$ where $m$ is odd, all its supercharacter theories are isomorphic to supercharacter theories for $C_{2m}$ the cyclic group of order $2m$.
\end{corollary}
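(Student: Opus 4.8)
The plan is to realize $G$ as a semidirect product of Abelian groups, apply the Theorem, and then exploit the hypothesis that $m$ is odd to replace the resulting direct product by a cyclic group. First I would observe that the dihedral group of order $2m$ is $G \cong C_m \rtimes_{\psi} C_2$, where $\psi: C_2 \to \aut(C_m)$ sends the nontrivial element of $C_2$ to the inversion automorphism $h \mapsto h^{-1}$ of $C_m$. Since $C_m$ and $C_2$ are both finite Abelian, the hypotheses of the Theorem are satisfied with $H = C_m$ and $K = C_2$.

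Next I would apply the Theorem to obtain a supercharacter theory $A$ of $C_m \times C_2$ isomorphic to the minimal supercharacter theory of $G$. As noted in the discussion preceding the Theorem, this yields more: every supercharacter theory of $G$ is isomorphic to a supercharacter theory of $C_m \times C_2$ (indeed to one that is coarser than $A$). Thus the problem reduces to identifying the supercharacter theories of $C_m \times C_2$ with supercharacter theories of $C_{2m}$.

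Finally, and this is the only place the parity hypothesis enters, I would use $\gcd(m,2) = 1$ to conclude $C_m \times C_2 \cong C_{2m}$ by the Chinese Remainder Theorem. A group isomorphism induces a $\mathbb{C}$-linear isomorphism of the group algebras that respects both $\regprod$ and $\hadaprod$ and carries $Z(\mathbb{C}(C_m \times C_2))$ onto $Z(\mathbb{C}C_{2m})$; hence it restricts to a bijection between the supercharacter theories of the two groups that preserves isomorphism type. Composing this with the isomorphism of the previous step shows that every supercharacter theory of $G$ is isomorphic to a supercharacter theory of $C_{2m}$.

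The argument has no serious obstacle once the Theorem is in hand, since the entire substance is contained in the Theorem and its consequences. The one genuinely necessary, corollary-specific ingredient is the isomorphism $C_m \times C_2 \cong C_{2m}$, which fails when $m$ is even; this is exactly why the oddness of $m$ must be assumed. The step most worth stating carefully is the transfer of supercharacter theories across the group isomorphism, but this is routine because a group isomorphism intertwines both products on the group algebras and therefore induces a lattice isomorphism of supercharacter theories.
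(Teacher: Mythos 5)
Your proposal is correct and follows exactly the argument the paper intends: write the dihedral group as $C_m\rtimes_\psi C_2$ with $\psi$ the inversion action, invoke the Theorem (and the preceding discussion) to transfer every supercharacter theory to $C_m\times C_2$, and use $\gcd(m,2)=1$ to identify $C_m\times C_2$ with $C_{2m}$. The paper leaves this unproved as an immediate consequence, and your careful remark about transporting supercharacter theories along a group isomorphism is the only detail it omits.
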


\bibliographystyle{alpha}
\bibliography{supercharactersSemidirect}

\textsc{Department of Mathematics, University of California, Davis, One Shields Avenue, Davis, CA} 95616-8633

\textit{E-mail address}:\texttt{ langa@math.ucdavis.edu}

\end{document}